\documentclass[12pt,a4paper]{article}
\usepackage[centertags]{amsmath}
\usepackage{amsfonts}
\usepackage{amssymb}
\usepackage{amsthm}
\usepackage{color}
\usepackage{marginnote}
\usepackage{float}

\usepackage[all]{xy}
\usepackage{fullpage}
\usepackage{tikz}
\usepackage{verbatim}
\usetikzlibrary{arrows}

\newtheorem{coro}{Corollary}
\newtheorem{defi}{Definition}

\newtheorem{rem}{Remark}
\newtheorem{prop}{Proposition}
\newtheorem{lem}{Lemma}

\pagestyle{plain}

\newcommand{\MTL}{\mathcal{MTL}}
\newcommand{\QMTL}{\mathit{lu}\mathcal{MTL}}
\newcommand{\SH}{\mathcal{SH}}

\newcommand{\Id}{\mathcal{I}}
\newcommand{\nId}{\mathcal{I}^{\ast}}
\newcommand{\LU}{\mathcal{LU}}

\usepackage{color}

\begin{document}

\title{Generalized Chain Products of MTL-chains}

\author{J. L. Castiglioni and W. J. Zuluaga Botero}


\maketitle

\begin{abstract}
\noindent
In this paper we present a different approach to ordinal sums of MTL-chains, as extensions of finite chains in the category of semihoops. In addition we prove in a very simple way that every finite locally unital MTL-chain can be decomposed as an ordinal sum of archimedean MTL-chains. Furthermore, we introduce Generalized Chain Products of MTL-chains and we show that ordinal sums of locally unital MTL-chains are particular cases of those.
\end{abstract}

\section*{Introduction}

In \cite{EG2001} Esteva and Godo proposed a new Hilbert-style calculus called Monoidal T-norm based Logic (MTL, for short) in order to find the fuzzy logic corresponding to the bigger class of all left-continuous t-norms. In \cite{JM2002} Jenei and Montagna proved that MTL was in fact, the weakest logic which is complete with respect to a semantics given by a class of t-norms and their residua. Since in MTL the contraction rule in general does not hold, such logic can be regarded not only as a fuzzy logic and as a many-valued logic, but also as a substructural logic. These results motivated to introducing a new class of algebras with an  equivalent algebraic semantics for MTL, the variety of MTL-algebras. MTL-algebras are essentially integral commutative residuated lattices with bottom satisfying the prelinearity equation:
$$
(x \to y) \vee (y \to x) \approx 1
$$

\noindent
In \cite{CZ2017} Castiglioni and Zuluaga characterized the class of finite MTL-chains which can be decomposed as an ordinal sum of archimedean MTL-chains. Such a class of finite MTL-algebras was called \emph{locally unital}. Nevertheless, the general problem of decompositions by ordinal sums of (arbitrary) MTL-chains still remains open. It is worth mentioning that the general problem was already solved for the subvariety of BL-chains (c.f \cite{AM2003},\cite{B2004}) by proving that (arbitrary) BL-chains can be decomposed as an ordinal sum of Wajsberg hoops.
\\

\noindent
The aim of this paper is to exhibit a self-contained and very intuitive proof of the decomposition of locally unital MTL-chains as an ordinal sum of archimedean MTL-chains in terms of poset products given in \cite{CZ2017}, by employing extensions of finite chains in the category of semihoops. Additionally, we propose a construction called Generalized Chain Product and we prove that such construction turns out to be a suitable generalization of ordinal sums for locally unital MTL-chains.
\\

\noindent
The first section is devoted to introduce all the concepts required to read this work. The second section is intended to give a characterization of those extensions of finite totally ordered semihoops which are isomorphic to an ordinal sum. In the third section we show that every finite locally unital MTL-chain can be decomposed as the ordinal sum of archimedean MTL-chains. Finally, in the last section, we introduce the Generalized Chain Products and we prove that the ordinal sum construction for finite totally ordered semihoops is indeed a particular case of these.

\section{Preliminaries}

A \emph{semihoop}\footnote{Also called basic semihoop in \cite{V2010}.} is an algebra $\textbf{A}=(A,\cdot, \rightarrow, \wedge, \vee, 1)$ of type $(2,2,2,2,0)$ such that $(A,\wedge,\vee)$ is lattice with $1$ as greatest element, $(A,\cdot,1)$ is a commutative monoid and for every $x,y,z\in A$ the following conditions hold:
\\

\begin{tabular}{cccccc}
\emph{(residuation)} & & & &  & $xy\leq z\; \text{if and only if}\; x\leq y\rightarrow z$
\\
\emph{(prelinearity)} & & & & & $(x\rightarrow y)\vee (y\rightarrow x) = 1$
\end{tabular}
\\

\noindent
Equivalently, a semihoop is an integral, commutative and prelinear residuated lattice. We write $\SH$ for the algebraic category of semihoops. A semihoop $\textbf{A}$ is \emph{bounded} if $(A,\wedge,\vee, 1)$ has a least element $0$. An \emph{MTL-algebra} is a bounded semihoop, hence, MTL-algebras are prelinear integral bounded commutative residuated lattices, as usually defined \cite{EG2001,NEG2005} and semihoops are basically ``MTL-algebras without zero''. An MTL-algebra (or semihoop) $\textbf{A}$ is an \emph{MTL-chain} (SH-chain) if its semihoop reduct is totally ordered. It is a well known fact, that the class of MTL-algebras is a variety. We write $\MTL$ for the category of MTL-algebras and MTL-homomorphisms. A totally ordered MTL-algebra is archimedean if for every $x \leq y < 1$, there exists $n \in \mathbb{N}$ such that $y^n \leq x$. A submultiplicative monoid $F$ of $M$ is called a filter if is an up-set with respect to the order of $M$. For every $x\in F$, we write $\langle x \rangle$ for the filter generated by $x$; i.e.,
\[\langle x \rangle = \{a \in M \ | \ x^n \leq a \; \text{for some } n \in \mathbb{N}\}.\]
\noindent
For any filter $F$ of $M$, we can define the relation $\sim_{F}$, on $M$ by $a\sim_{F} b$ if and only if $a\rightarrow b\in F$ and $b\rightarrow a\in F$. It follows that $\sim_{F}$ is indeed a congruence on $M$. On MTL-algebras there exists a well known correspondence between filters and congruences (c.f. \cite{NEG2005}) so we write the quotient $M/\sim_{F}$ by $M/F$. For every $a\in M$, we write $[a]_{F}$ for the equivalence class of $a$ in $M/F$. If there is no ambiguity, we simply write $[a]$. A filter $F$ of $M$ is \emph{prime} if $0\notin F$ and $x\vee y \in F$ entails $x\in F$ or $y\in F$, for every $x,y\in M$.
\\

\noindent
Let $\textbf{I}=(I,\leq)$ be a totally ordered set and $\mathcal{F}=\{\textbf{A}_{i}\}_{i\in I}$ a family of semihoops. Let us assume that the members of $\mathcal{F}$ share (up to isomorphism) the same neutral element; i.e., for every $i\neq j$, $A_{i}\cap A_{j}=\{1\}$. The \emph{ordinal sum} of the family $\mathcal{F}$, is the structure $\bigoplus_{i\in I} A_{i}$ whose universe is $\bigcup_{i\in I} A_{i}$ and whose operations are defined as:
\begin{displaymath}
 x\cdot y= \left\{ \begin{array}{lcl}
             x\cdot_{i}y, & \mbox{if} & x,y\in A_{i}
             \\ y, & \mbox{if} & x\in A_{i},\; \mbox{and}\; y\in A_{j}-\{1\},\; \mbox{with}\; i>j,
             \\ x, & \mbox{if} & x\in A_{i}-\{1\},\; \mbox{and}\; y\in A_{j},\; \mbox{with}\; i<j.
             \end{array}
   \right.
\end{displaymath}

\begin{displaymath}
 x\rightarrow y= \left\{ \begin{array}{lcl}
             x\rightarrow_{i}y, & \mbox{if} & x,y\in A_{i}
             \\ y, & \mbox{if} & x\in A_{i},\; \mbox{and}\; y\in A_{j},\; \mbox{with}\; i>j,
             \\ 1, & \mbox{if} & x\in A_{i}-\{1\},\; \mbox{and}\; y\in A_{j},\; \mbox{with}\; i<j.
             \end{array}
   \right.
\end{displaymath}
\noindent
where the subindex $i$ denotes the application of operations in $A_{i}$.
\\

\noindent
Furthermore, if $\textbf{I}$ has a minimum $\perp$, $A_{i}$ is a totally ordered semihoop for every $i\in \textbf{I}$ and $A_{\perp}$ is bounded then $\bigoplus_{i\in I} A_{i}$ becomes an MTL-chain. In order to clarify notation, we will use the symbol $\boxplus$ to denote the usual linear sum of lattices (as defined in Section 1.24 of \cite{DP}).
\\

\noindent
Let $M$ be an MTL-algebra. Write $0,1$ for the trivial idempotents, and $\Id(M)$ for the set of all idempotent elements of $M$ and $\nId(M)$ for $\Id(M)-\{0\}$. We say that $e\in \nId(M)$ is a \emph{local unit} if $xe=x$, for all $x\leq e$. Clearly $1$ is a local unit. If $M$ is archimedean, $1$ is in fact the only local unit of $M$. Notice that there may be idempotents that are not local units. As an example, one may consider the MTL-algebra $\textbf{A}$ whose underlying set is the totally ordered set $A=\{0,x,e,1\}$ and the product is determined by the following table:

\begin{table}[h]
\begin{center}
{\small $\begin{tabular}{c|c|c|c|c|}
$\cdot$ & 1 & e & x & 0
\\
\hline
1 & 1 & e & x & 0
\\
\hline
e & e & e & 0 & 0
\\
\hline
x & x & 0 & 0 & 0
\\
\hline
0 & 0 & 0 & 0 & 0
\\
\hline
\end{tabular}$
}
\end{center}
\caption{MTL-algebra without local units.}
\label{Table1}
\end{table}
\noindent
Let us to consider the following quasi-identity in the language of MTL-algebras (or semihoops):
\begin{itemize}
\item[(LU)] If $e^{2}=e$ and $e\vee x=e$ then $ex=x$.
\end{itemize}

\noindent
In \cite{CZ2017} the corresponding quasivariety resulting from adding $(LU)$ to the theory of MTL-algebras was called \emph{locally unital} MTL-algebras (luMTL, for short). In this this paper, locally unital MTL-algebras will be denoted by $\QMTL$. Observe that every archimedean chain is a member of $\QMTL$.

\section{Another perspective on finite luMTL-chains}
\label{Another perspective for finite luMTL-chains}

We start this section with the following observation: Let $M$ be a semihoop and $F$ be a filter of $M$. A straightforward verification shows that $F$ is a subalgebra of $M$, which basically sais that in $\SH$, filters always belong to the same category of the original algebra. Such situation does not hold for MTL-algebras, since the only filter of $M$ which is a subalgebra is $M$ itself. Hence, taking advantage of the fact that MTL-algebras are ``semihoops with zero", along this section we will place our selves in the context of semihoops, rather than MTL-algebras.
\\

\begin{rem}\label{Semihoops are semiabelian}
The proof of \cite{J2004}, showing that the category of Heyting semilattices is semi-abelian, also works to show that the variety of integral commutative residuated lattices is semi-abelian. In particular, we can conclude that $\SH$ is semi-abelian.
\end{rem}

\noindent
Let $K,C\in \SH$ with $C$ a chain. By Remark \ref{Semihoops are semiabelian}, $\SH$ is semi-abelian, so \emph{an extension $E$ of $K$ by $C$} is simply a split short exact sequence in $\SH$ of the form
\begin{displaymath}
\xymatrix{
\textbf{0} \ar[r] & K \ar@{>->}[r]^-{k} & E \ar@{>>}[r]_-{p} & C \ar@/_/[l]_-{s} \ar[r] & \textbf{0}
}
\end{displaymath}
\noindent
Here $\textbf{0}=\{1\}$ is the zero object of the category. An interesting problem is to classify all the extensions of this form. Nevertheless, in this paper we give a solution for a more concrete problem. The aim of this section is presenting a characterization of the extensions of $C$ by $K$ which are isomorphic to the ordinal sum of $C$ with $K$.
\\

\begin{lem}\label{Classification of some split extensions in SH}
Let $K,E,C \in \SH$ where $C$ is a chain. Consider the following short exact sequence:
\begin{displaymath}
\xymatrix{
\textbf{0} \ar[r] & K \ar@{>->}[r]^-{k} & E \ar@{>>}[r]^-{p} & C \ar[r] & \textbf{0}
}
\end{displaymath}
Then, $E\cong (C-\{1\}) \boxplus K$ if and only if the sequence is an extension $E$ of $K$ by $C$, such that:
\begin{itemize}
\item[i)] $E=k(K)\cup s(C)$,
\item[ii)] $k(K)\cap s(C)=\textbf{0}$.
\end{itemize}
\end{lem}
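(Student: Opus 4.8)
The plan is to treat the two implications separately, with essentially all of the work concentrated in the sufficiency direction ($\Leftarrow$), where the isomorphism has to be built by hand. Throughout I identify $K$ with the subalgebra $k(K)$ and record two structural facts that come for free: first, since $k$ is the kernel of $p$, the set $k(K)=p^{-1}(1)$ is a filter of $E$, and hence --- as observed at the start of this section --- a subalgebra; second, since $s$ is a section it is injective, so $s(C)$ is a subalgebra of $E$ isomorphic to $C$. Conditions (i) and (ii) then say exactly that $E$ is the disjoint union $k(K)\sqcup(s(C)-\{1\})$, because $1\in k(K)$ and $k(K)\cap s(C)=\{1\}$.

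The engine of the sufficiency proof is the following separation principle: for every $x\in E$ one has $x\in k(K)$ if and only if $p(x)=1$. Indeed $k(K)=\ker p$ gives one direction, and if $p(x)=1$ while $x=s(c)\in s(C)$, then $c=p(s(c))=1$, so $x=1\in k(K)$. I would use this to compute all ``mixed'' operations. Fix $a\in K$ and $c\in C-\{1\}$. From $p(k(a)\cdot s(c))=p(k(a))\cdot p(s(c))=1\cdot c=c\neq 1$ the separation principle forces $k(a)\cdot s(c)\in s(C)$, say $k(a)\cdot s(c)=s(c')$; applying $p$ gives $c'=c$, whence $k(a)\cdot s(c)=s(c)$. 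The same fibre argument applied to $k(a)\to s(c)$ (whose image under $p$ is $1\to c=c$) yields $k(a)\to s(c)=s(c)$, while $s(c)=k(a)\cdot s(c)\le k(a)$ gives $s(c)\to k(a)=1$ and fixes the order between the two blocks, so that $s(c)\wedge k(a)=s(c)$ and $s(c)\vee k(a)=k(a)$. These are precisely the ordinal-sum clauses with $C$ as the bottom summand and $K$ on top.

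With the mixed operations in hand I would define $\phi\colon E\to C\oplus K$ by $\phi(k(a))=a$ and $\phi(s(c))=c$. It is well defined by (ii) (the two recipes agree on the common element $1$), surjective by (i), and injective because $k$ and $s$ are injective and the separation principle keeps $k(K)$ and $s(C)-\{1\}$ in different fibres of $p$. That $\phi$ is a homomorphism splits into three checks: on $k(K)$ it is $k^{-1}$ and on $s(C)$ it is $p|_{s(C)}$, both isomorphisms onto the respective summands, while the mixed case is exactly the list of identities established above, matched against the defining clauses of $\bigoplus$. Hence $\phi$ is an isomorphism and $E\cong(C-\{1\})\boxplus K$.

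For the necessity direction ($\Rightarrow$) I would run the construction backwards: identifying $E$ with the ordinal sum $C\oplus K$, the inclusion of the bottom summand is a semihoop morphism $s\colon C\to E$ (its action on any two elements of $C$ is computed inside $C$ by the ordinal-sum clauses) satisfying $p\circ s=\mathrm{id}_C$, so the sequence splits and is genuinely an extension; conditions (i) and (ii) are then read off from $k(K)=K$ (the top block, i.e. $\ker p$) and $s(C)=(C-\{1\})\cup\{1\}$. The main obstacle is the middle paragraph: everything hinges on showing that a mixed product or implication cannot leak into the wrong block, and it is only the combination of (i) (so that the result lies in $k(K)\cup s(C)$) with the fibre computation under $p$ (so that it lies in the correct one) that pins the value down --- without both hypotheses the operation table of $E$ need not be that of an ordinal sum.
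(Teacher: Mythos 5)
Your proof is correct and follows essentially the same route as the paper's: identify $K$ and $C$ with their images, use $k(K)=p^{-1}(1)$ together with condition (i) to pin down the mixed products and residua by computing their images under $p$, and read off the ordinal-sum structure. The only differences are cosmetic --- you derive the order relation $s(c)\le k(a)$ from $k(a)\cdot s(c)=s(c)$ and integrality rather than asserting it from the filter property, and you write out the isomorphism $\phi$ explicitly --- which if anything makes the argument slightly more careful than the original.
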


\begin{proof}
Let us assume that the short exact sequence of above is an extension $E$ of $K$ by $C$ satisfying $(i)$ and $(ii)$. Since $s,k$ are morphisms of semihoops, $s(C)$ and $k(K)$ are subobjects of $E$ isomorphic to $K$ and $C$, respectively. So let us simply write $K$ for $k(K)$ and $C$ for $s(C)$. Let $e\in E$. By $(i)$, $e\in K$ or $e\in C$. If $e\in K\cap C$, then, by $(ii)$, $e=1$. If $e\in K$ and $f\in E$ is such that $e\leq f$, then, from $1=p(e)\leq p(f)$ we get that $p(f)=1$. Hence $f\in K$. In consequence, $K$ is a filter in $E$, and every $c\in C-\{1\}$ is below every $k\in K$. Thereby, as a lattice, $E \cong (C-\{1\})\boxplus K$. Notice that, both $K$ and $C$ are closed by the product and the residuum. So we just have to calculate $ck$, $c\rightarrow k$ and $k\rightarrow c$ for $c\in C$ and $k\in K$. Observe that we can assume $c\neq 1$. From the calculation
\[ p(ck)=p(c)p(k)=p(c)\cdot 1 =p(c)\]
we can conclude that $ck=c$ for every $c\in C$ and $k\in K$. Taking into account the order of $E$, thus $ck=c\wedge k$. On the other hand, $p(k\rightarrow c)=1\rightarrow p(c)=p(c)$ so $k\rightarrow c=c$; $p(c\rightarrow k)=p(c)\rightarrow 1=1$, hence $c\rightarrow k\in K$. Finally, since $c\wedge k'=ck'\leq c \leq k$ for every $k'\in K$, then $c\rightarrow k=1$.
\\

\noindent
We have proved that the binary operations on $E\cong (C-\{1\})\boxplus K$ are given by:
\begin{displaymath}
kc= \left\{ \begin{array}{lll}
             k\cdot_{K} c, & k,c\in K
             \\ k\cdot_{C} c, & k,c\in C
             \\ k\wedge c,   & \mbox{otherwise}
             \end{array}
   \right.
\end{displaymath}

\begin{displaymath}
k\rightarrow c= \left\{ \begin{array}{lll}
             k\rightarrow_{K} c, & k,c\in K
             \\ k\rightarrow_{C} c, & k,c\in C
             \\ c,   & k\in K, c\in C
             \\ 1,   & \mbox{otherwise}
             \end{array}
   \right.
\end{displaymath}
\noindent
That is to say, $E\cong K\oplus C$, the ordinal sum of semihoops. The converse follows directly from the definition of ordinal sum of semihoops.
\end{proof}

\noindent
To conclude this section we would like to remark that in $\SH$ there are extensions that are not of the kind presented in Lemma \ref{Classification of some split extensions in SH}. In order to exhibit one, consider the semihoop $A=\{0,x,e,1\}$ whose product is defined in Table \ref{Table1}. Let $F={\uparrow}e$ and $\textbf{2}$ be the chain of two elements. It is clear that $A/F\cong \textbf{2}$ and that $s:\textbf{2}\rightarrow A$, defined by $s(1)=1_{A}$ and $s(0)=0_{A}$ is a section of the quotient map, so $\textbf{0}\rightarrow F\rightarrow  A \rightarrow \textbf{2}\rightarrow \textbf{0}$ is a short split exact sequence. Nevertheless, $x\notin s(\textbf{2})\cup k(F)$ so $A\ncong \textbf{2}\oplus F$.

\section{Dealing with finite chains in luMTL}

In \cite{B2004}, Busaniche proved that every BL-chain can be decomposed as an ordinal sum of totally ordered Wajsberg hoops. In \cite{CZ2017}, it was observed that, as a consequence of the divisibility condition for BL-algebras\footnote{For every $x,y$ the equation $x(x\rightarrow y)=x\wedge y$ holds.}, it follows that every BL-algebra is a locally unital MTL-algebra. Moreover, it can be proved that every totally ordered Wajsberg hoop is in fact an archimedean MTL-algebra. The aim of this section is to generalize the finite case of Busaniche's result to finite locally unital MTL-chains. That is to say, to give an elementary proof of the fact that every finite locally unital MTL-chain decomposes as an ordinal sum of archimedean MTL-chains (see \cite{CZ2017} for another proof).
\\

\noindent
Let $M$ be a finite MTL-chain (or SH-chain). The following useful characterization was proved in \cite{CZ2017}.

\begin{prop}\label{Characterization of finite archimedean MTL-chains}
Let $M$ be a finite MTL-chain. The following are equivalent:
\item[i)] $M$ is archimedean,
\item[ii)] $M$ is simple,
\item[iii)] $\Id(M)=\{0,1\}$.
\end{prop}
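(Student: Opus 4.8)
The plan is to prove the three conditions equivalent by a single cycle of implications, (i) $\Rightarrow$ (ii) $\Rightarrow$ (iii) $\Rightarrow$ (i). Before starting I would translate simplicity into the language of filters: by the correspondence between congruences and filters recalled in the Preliminaries, $M$ is simple exactly when its only filters are $\{1\}$ and $M$ itself. With that dictionary the statement reduces to linking three features of the chain: the archimedean law, the absence of a proper nontrivial filter, and the absence of a nontrivial idempotent. I would also use at the outset that, in a chain, a filter is an up-set, so a filter is proper if and only if it omits $0$.

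For (i) $\Rightarrow$ (ii), let $F$ be a proper filter, so $0\notin F$, and suppose $F\neq\{1\}$. Pick $a\in F$ with $a<1$. Applying the archimedean law to $0\leq a<1$ gives $n$ with $a^{n}\leq 0$, hence $a^{n}=0$; but $F$ is a submonoid, so $a^{n}\in F$, forcing $0\in F$, a contradiction. Thus $F=\{1\}$ and $M$ is simple. I note this direction does not use finiteness at all.

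For (ii) $\Rightarrow$ (iii) I would argue contrapositively. Suppose $e$ is idempotent with $0<e<1$. The key observation is that $\langle e\rangle={\uparrow}e$: since $e^{n}=e$ for every $n$, the filter generated by $e$ is simply its up-set, and idempotency is exactly what makes this up-set closed under the product, because $a,b\geq e$ gives $ab\geq e^{2}=e$. This filter is proper since $e>0$ keeps $0$ out, and nontrivial since $e\neq 1$. Hence $M$ admits a congruence other than the two trivial ones, contradicting simplicity.

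The direction carrying the real content is (iii) $\Rightarrow$ (i), and this is where finiteness is indispensable. Given $x\leq y<1$, I would consider the decreasing sequence $y\geq y^{2}\geq y^{3}\geq\cdots$. Finiteness forces it to become eventually constant, say $y^{k}=y^{k+1}=\cdots$, and then $e:=y^{k}$ satisfies $e^{2}=y^{2k}=y^{k}=e$, so $e$ is idempotent. Since $e\leq y<1$ we have $e\neq 1$, so hypothesis (iii) yields $e=0$, that is $y^{k}=0\leq x$, which is precisely the archimedean conclusion. The only delicate point is the stabilization step, which fails for infinite chains; I would therefore arrange the cycle so that finiteness is invoked exactly once, here.
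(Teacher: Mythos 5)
Your proof is correct and complete. Note that the paper itself does not prove this proposition: it is quoted from \cite{CZ2017}, so there is no in-paper argument to compare against. Your three-step cycle is the natural self-contained one: (i)$\Rightarrow$(ii) by noting a proper filter omits $0$ while the archimedean law forces a power of any non-unit element down to $0$; (ii)$\Rightarrow$(iii) by exhibiting ${\uparrow}e$ as a proper nontrivial filter for a nontrivial idempotent $e$ (the observation $\langle e\rangle={\uparrow}e$ is exactly Proposition \ref{Equivalence filters and idempotents} of the paper); and (iii)$\Rightarrow$(i) by stabilizing the descending sequence of powers, which is the one place finiteness enters. Your explicit tracking of where finiteness is and is not needed is a nice touch, and all the small points that need checking (that a filter of a chain is proper iff it omits $0$, that a proper nontrivial filter gives a proper nontrivial congruence via $[1]_F=F$, and that $y^{2k}=y^k$ once the sequence stabilizes) go through as you describe.
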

\noindent
Write $\langle X \rangle$ for the filter generated by $X\subseteq M$ and $\LU(M)$ for the set of local units of $M$. Note that in general $\LU(M)\subseteq \nId(M)$. Furthermore, if $M\in \QMTL$ then $\LU(M)= \nId(M)$.

\begin{prop}[Corollary 4, \cite{CZ2017}]
	\label{Equivalence filters and idempotents}
In any finite MTL-algebra $M$, the following are equivalent:
\begin{enumerate}
	\item[i)] $a \in \mathcal{I}(M)$,
	\item[ii)] $\langle a \rangle = \ {\uparrow} a$.
\end{enumerate}
\end{prop}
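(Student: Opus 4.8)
The plan is to establish both implications directly from the definition of $\langle a \rangle$ together with the integrality of $M$. Recall that, as an integral residuated lattice, $M$ has $1$ as its greatest element, whence $a^{2}=a\cdot a\leq a\cdot 1=a$. This inequality $a^{2}\leq a$ is the half of idempotency that always holds, and the whole argument amounts to isolating a situation that forces the reverse inequality $a\leq a^{2}$.

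For $(i)\Rightarrow(ii)$, suppose $a\in\mathcal{I}(M)$, so that $a^{2}=a$ and hence, by an immediate induction, $a^{n}=a$ for every $n\geq 1$. Substituting this into
\[
\langle a \rangle = \{b \in M \mid a^{n} \leq b \text{ for some } n \in \mathbb{N}\},
\]
the defining condition ``$a^{n}\leq b$ for some $n$'' collapses to ``$a\leq b$'', and therefore $\langle a \rangle = {\uparrow}a$.

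For the converse $(ii)\Rightarrow(i)$, assume $\langle a \rangle = {\uparrow}a$. The key observation is that $a^{2}\in\langle a \rangle$ for the trivial reason that $a^{2}\leq a^{2}$, taking $n=2$ in the definition above. Since by hypothesis $\langle a \rangle = {\uparrow}a$, this membership gives $a^{2}\in{\uparrow}a$, which is exactly the inequality $a\leq a^{2}$. Combining it with $a^{2}\leq a$ from integrality yields $a^{2}=a$, that is, $a\in\mathcal{I}(M)$.

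No step here poses a genuine obstacle; the only thing to spot is that the right witness for membership in the filter is $a^{2}$ itself, after which the two inequalities pinch $a^{2}$ to $a$. I would also remark that the argument uses only integrality and never the finiteness of $M$, so the stated equivalence in fact holds for every MTL-algebra.
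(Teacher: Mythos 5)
Your proof is correct. The paper itself gives no argument for this proposition --- it is imported as Corollary~4 of \cite{CZ2017} --- so there is nothing internal to compare against, but your two implications are exactly the right ones: idempotency collapses all powers $a^n$ to $a$, giving $\langle a\rangle={\uparrow}a$; and conversely the membership $a^2\in\langle a\rangle$ pinches $a\le a^2\le a$ via integrality (the upper bound $a^2\le a$ using monotonicity of the product together with $a\le 1$). Your closing remark is also worth keeping: the argument nowhere uses finiteness, so the equivalence holds in any MTL-algebra (indeed in any integral commutative residuated lattice), whereas the paper states it only for finite $M$, presumably because that is the form in which \cite{CZ2017} records it. The only point to be slightly careful about is the convention for $\mathbb{N}$ in the definition of $\langle a\rangle$: if $n=0$ is admitted, the condition $a^0=1\le b$ contributes only $b=1$, which already lies in ${\uparrow}a$, so the conclusion is unaffected either way.
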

\noindent
Note that if $M$ is finite and $F$ is a proper filter of $M$, there exists a unique $e\in \nId(M)$ such that $F={\uparrow} e$. We write $eM$ for the set $eM=\{ex\mid x\in M\}$.

\begin{rem}\label{Observation 1}
Let $M$ be a finite MTL-chain and $e\in \LU(M)$. Then the extension
\begin{displaymath}
\xymatrix{
\textbf{0} \ar[r] & {\uparrow} e \ar[r] & M \ar[r] & M/{\uparrow} e \ar[r] & \textbf{0}
}
\end{displaymath}
is an extension of the type of Lemma \ref{Classification of some split extensions in SH} and $M/{\uparrow} e \cong eM$ (which is in fact true for any $e\in \LU(M)$). Hence $M\cong {\uparrow} e \oplus eM$.
\end{rem}

\begin{lem}\label{Every QMTL finite chain is an ordinal sum of archimedian chains}
Every finite locally unital MTL-chain is an ordinal sum of archimedean chains.
\end{lem}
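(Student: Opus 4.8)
The plan is to argue by induction on the cardinality of $M$. If $\Id(M)=\{0,1\}$, then Proposition \ref{Characterization of finite archimedean MTL-chains} (the implication $(iii)\Rightarrow(i)$) tells us that $M$ is archimedean, and there is nothing to prove, since $M$ is then an ordinal sum with a single (archimedean) summand. So assume $M$ possesses a nontrivial idempotent, i.e.\ there is $e\in\nId(M)$ with $0<e<1$.

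Since $M\in\QMTL$ we have $\LU(M)=\nId(M)$, so $e$ is a local unit. This is precisely the hypothesis needed to invoke Remark \ref{Observation 1}: the extension $\textbf{0}\to {\uparrow}e\to M\to M/{\uparrow}e\to \textbf{0}$ is of the type classified in Lemma \ref{Classification of some split extensions in SH}, whence
\[
M\cong {\uparrow}e\oplus eM ,
\]
where $M/{\uparrow}e\cong eM$. Here ${\uparrow}e$ is the top summand and $eM$ the bottom one. Because $0<e<1$, each of ${\uparrow}e$ and $eM$ omits at least one element of $M$ (the former all elements below $e$, the latter all elements above $e$), so both are strictly smaller than $M$.

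The key point is then to check that the two summands again belong to the class to which the induction hypothesis applies, i.e.\ that each is a finite locally unital MTL-chain. For ${\uparrow}e$ this is immediate: it is a filter of $M$, hence a subalgebra of $M$ in $\SH$; it is totally ordered, it has least element $e$ (so it is bounded, i.e.\ an MTL-chain), and the quasi-identity (LU), being a quasi-identity, is inherited by subalgebras. The delicate summand is $eM\cong M/{\uparrow}e$: a quasivariety need not be closed under quotients, so one cannot simply invoke closure. Instead I would verify (LU) directly, exploiting that the product of $eM$ is just the restriction of that of $M$, since $(ea)(eb)=e(ab)\in eM$. Consequently the idempotents of $eM$ are exactly the idempotents of $M$ lying in ${\downarrow}e$, and each such idempotent is already a local unit of $M$; hence for an idempotent $f$ of $eM$ and any $x\le f$ one has $fx=x$ when computed in $M$, which is the same value in $eM$. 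As the relevant joins are also computed in the sublattice ${\downarrow}e$, it follows that $eM$ satisfies (LU); being totally ordered and bounded, it is again a finite locally unital MTL-chain.

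Having placed both summands back in the class, I would apply the induction hypothesis to each, writing ${\uparrow}e$ and $eM$ as ordinal sums of archimedean chains, and then conclude by the associativity of the ordinal sum construction that $M\cong {\uparrow}e\oplus eM$ is itself an ordinal sum of archimedean chains. The main obstacle in this argument is exactly the verification that $eM$ remains locally unital: since $\QMTL$ is only a quasivariety, closure under the relevant quotient is not automatic and must be extracted from the concrete description of $eM$ as $\{ex:x\in M\}={\downarrow}e$, together with the fact that all of its idempotents are inherited local units of $M$.
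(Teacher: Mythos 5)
Your proof is correct and follows essentially the same route as the paper: both split $M$ at a local unit via Remark \ref{Observation 1} and Lemma \ref{Classification of some split extensions in SH} and then induct (the paper inducts on $|\LU(M)|$ and always splits at the penultimate local unit $e_{n-1}$, so that the top summand ${\uparrow}e_{n-1}$ is already archimedean; you induct on $|M|$, split at an arbitrary nontrivial local unit, and apply the hypothesis to both summands, using associativity of $\oplus$). Your explicit check that $eM={\downarrow}e$ inherits (LU) --- a point the paper passes over silently when it asserts $\LU(e_{n-1}M)=n-1$, and which does require an argument since $\QMTL$ is only a quasivariety --- is a worthwhile addition.
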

\begin{proof}
Let $M$ be a finite locally unital MTL-chain. It follows that $\LU(M)$ is also a finite chain. In order to prove the statement, we proceed by induction over the amount of local units of $M$. If $|\LU(M)|=1$, then by Proposition \ref{Characterization of finite archimedean MTL-chains}, $M$ is archimedean. Let us assume that $|\LU(M)|>1$ and let us order its elements in an increasing way:
\[e_{1}<...<e_{n}=1\]
\noindent
Let us suppose that any locally unital MTL-chain $N$ with $|\LU(N)|=n-1$, is an ordinal sum of $n-1$ archimedean chains:
\[N\cong C_{1}\oplus ...\oplus C_{n-1}\]
\noindent
Since $e_{n-1}\in \LU(M)$, then, by Remark \ref{Observation 1}, the short exact sequence
\begin{displaymath}
\xymatrix{
\textbf{0} \ar[r] & {\uparrow} e_{n-1} \ar[r] & M \ar[r]_-{p} & e_{n-1}M \ar@/_/[l]_-{s} \ar[r] & \textbf{0}
}
\end{displaymath}
\noindent
is a split extension, and hence, by Lemma \ref{Classification of some split extensions in SH}, $M\cong {\uparrow} e_{n-1} \oplus e_{n-1}M$. Furthermore, ${\uparrow} e_{n-1}$ is archimedean by Proposition \ref{Characterization of finite archimedean MTL-chains}, and $\LU(e_{n-1}M)=n-1$. By inductive hypothesis we can conclude that $e_{n-1}M\cong C_{1}\oplus ...\oplus C_{n-1}$. An easy verification shows that
\[M\cong {\uparrow} e_{n-1}\oplus e_{n-1}({\uparrow} e_{n-2})\oplus ... \oplus e_{2}({\uparrow} e_{1})\oplus e_{1}M\]
This concludes the proof.
\end{proof}

\section{Generalized Chain Products}\label{Generalized Chain Products}

In this section we introduce the concept of Generalized Chain Products of MTL-chains and we show its intimate relation with extensions by chains of totally ordered semihoops. Furthermore, we show that generalized chain products are in fact, a generalization of ordinal sums of locally unital MTL-chains.
\\

\noindent
Let $F$ and $A$ be two finite MTL-chains. Since $A$ is finite, we can order its elements in an increasing way; let us say, $0_{A}<x_{1}<...<x_{n-1}<1_{A}$. Let $\mathcal{C}_{(F,A)}=\{C_{j}\mid j\in A\}$ a family of sets such that:
\begin{enumerate}
\item For every $i\in A$, $C_{i}$ is a finite chain,
\item For $i=1_{A}$, $C_{1_{A}}=F$, and
\item If $i \neq j$, with $i, j \in A$, then $C_{i} \cap C_{j} = \emptyset$.
\end{enumerate}
\noindent
Let $E:= \bigcup_{i\in A}C_{i}$ with the order induced by the ordinal sum of lattices; that is, as a poset, $E = {\mathrm{\boxplus}}_{i\in A} C_{i}$. Hence $E$ is a lattice. Observe that, since each $C_{i}$ is a finite lattice, then it has a minimum element. Let us write $0_{i}$ for it.
\\

\noindent
Now, take a family of functions $\textbf{M}_{(F,A)}=\{\mu_{ij}:C_{i}\times C_{j}\rightarrow C_{i \cdot j}\mid i,j\in A\}$ (where $i \cdot j$ denotes the product in $A$), such that:
\begin{itemize}
\item[i)] For every $i,j\in A$, $\mu_{ij}$ is monotone in each coordinate,
\item[ii)] For every $k_{i}\in C_{i}$ and $k_{j}\in C_{j}$, $\mu_{ij}(0_{i},k_{j})=\mu_{ij}(k_{i},0_{j})=0_{ij}$,
\item[iii)] $\textbf{M}_{(F,A)}$ is \emph{jointly associative}; that is, the following diagram
\begin{displaymath}
\xymatrix{
C_{i}\times C_{j}\times C_{k} \ar[r]^-{id_{i}\times \mu_{jk}} \ar[d]_-{\mu_{ij}\times id_{k}} & C_{i}\times C_{j \cdot k} \ar[d]^-{\mu_{i \cdot (j \cdot k)}}
\\
C_{i \cdot j}\times C_{k} \ar[r]_-{\mu_{(i \cdot j)\cdot k}} & C_{i\cdot (j\cdot k)} = C_{(i \cdot j) \cdot k}
}
\end{displaymath}
\noindent
commutes, for every $i,j,k\in A$.
\item[iv)] $\textbf{M}_{(F,A)}$ is \emph{jointly commutative}; that is, the following diagram
\begin{displaymath}
\xymatrix{
C_{i}\times C_{j} \ar[r]^-{\mu_{ij}} \ar[d]_-{\tau} & C_{i \cdot j} \ar@{=}[d]\\
C_{j}\times C_{i} \ar[r]_-{\mu_{ji}} & C_{j \cdot i}
}
\end{displaymath}
\noindent
commutes, for every $i,j,k\in A$.
\item[v)] $\textbf{M}_{(F,A)}$ has a \emph{global unit}; that is, the following diagram
\begin{displaymath}
\xymatrix{
\textbf{1}\times C_{i} \ar[r] \ar[d]_-{1_{F} \times id_{i}} & C_{i} \\
C_{1}\times C_{i} \ar[ur]_-{\mu_{1i}} &
}
\end{displaymath}
\noindent
commutes, for every $i \in A$. Here $\textbf{1}$ is the singleton chain and $1$ is the unit of $A$.
\end{itemize}
\noindent
The latter presentation lead us to introduce the following concept.

\begin{defi}\label{Generalized chain product}
Let $F$ and $A$ be two finite MTL-chains. A \emph{generalized chain product} (induced by $F$ and $A$) is a pair $(\mathcal{C}_{(F,A)},\textbf{M}_{(F,A)})$ such that:
\begin{itemize}
\item[1.] The collection $\mathcal{C}_{(F,A)}=\{C_{j}\mid j\in A\}$ satisfies $1.$, $2.$ and $3$,
\item[2.] The set $\textbf{M}_{(F,A)}=\{\mu_{ij}:C_{i}\times C_{j}\rightarrow C_{i \cdot j}\mid i,j\in A\}$ is a family of functions satisfying $(i)$ to $(v)$.
\end{itemize}

\end{defi}

\begin{lem}\label{Generalized chain products arise extensions}
Let $F$ and $A$ be two finite MTL-chains. Then, every generalized chain product $(\mathcal{C}_{(F,A)},\textbf{M}_{(F,A)})$ defines an extension of $A$ by $F$.
\end{lem}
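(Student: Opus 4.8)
The goal is to show that a generalized chain product $(\mathcal{C}_{(F,A)},\textbf{M}_{(F,A)})$ assembles into a semihoop $E$ that sits in a split short exact sequence with kernel $F$ and quotient $A$. The plan is to endow the poset $E = \boxplus_{i\in A} C_i$ (already a lattice by construction) with a multiplication and a residuum, verify the semihoop axioms, and then exhibit the maps $k$, $p$, $s$ making the sequence split exact.

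First I would define the global product $\cdot$ on $E$ by setting $x\cdot y := \mu_{ij}(x,y)$ whenever $x\in C_i$ and $y\in C_j$; this is well defined by condition $3$ (the $C_i$ are disjoint) and lands in $C_{i\cdot j}$. The monoid structure then transfers directly from the hypotheses on $\textbf{M}_{(F,A)}$: commutativity of $\cdot$ is exactly the jointly commutative diagram (iv), associativity is the jointly associative diagram (iii), and the neutral element is supplied by the global unit condition (v), with $1_E$ being the top element $1_F$ of $C_{1_A}=F$. Conditions (i) and (ii) guarantee monotonicity of $\cdot$ in each argument and that each $0_i$ behaves absorbingly within its fiber, so that $\cdot$ is compatible with the $\boxplus$-order. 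I would then define the residuum by the usual residuation formula $x\to y := \bigvee\{z\in E \mid x\cdot z \leq y\}$, the supremum existing because $E$ is a finite lattice; the real content is checking that this supremum is actually attained, i.e. that $x\cdot(x\to y)\leq y$, which follows from monotonicity of $\mu$ in the second coordinate together with finiteness.

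The main technical obstacle I expect is verifying prelinearity, $(x\to y)\vee(y\to x)=1_E$, since this is where the totally-ordered structure of the fibers and of $A$ must interact correctly with the cross-fiber products. My plan here is to split into cases according to whether $x$ and $y$ lie in the same fiber $C_i$ or in different fibers. If $x,y$ lie in different fibers, say $x\in C_i$, $y\in C_j$ with $i\neq j$, then the $\boxplus$-order makes $x,y$ comparable across fibers in a way that forces one of the two residua to be $1_E$, so prelinearity is immediate. If $x,y$ lie in the same fiber $C_i$, I would need to compare the relevant residua computed inside $E$ with those computed inside the chain $C_i$; the subtlety is that a witness $z$ for $x\cdot z\leq y$ need not lie in $C_i$, so I must argue that elements of lower fibers are absorbed via condition (ii) and elements of higher fibers do not help, reducing the computation to the chain $C_i$ where prelinearity holds because $C_i$ is totally ordered. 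This case analysis is the crux and will require care, but no single step is deep.

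Once $E$ is shown to be a semihoop, the exact-sequence part is routine. I would let $k\colon F\hookrightarrow E$ be the inclusion of the top fiber $C_{1_A}=F$, which is a filter of $E$ since $F={\uparrow}(\text{bottom of }C_{1_A})$ sits above every other fiber in the $\boxplus$-order. The quotient map $p\colon E\twoheadrightarrow A$ sends each $x\in C_i$ to $i$; conditions (i)--(v) are precisely what is needed for $p$ to be a semihoop homomorphism onto $A$ with kernel-filter $F$, so that $E/{\uparrow}e \cong A$ in the sense used earlier. Finally I would produce a section $s\colon A\to E$ by sending $i\in A$ to the bottom element $0_i\in C_i$ (and $1_A$ to $1_F$); condition (ii) guarantees $s$ preserves products and, since each fiber contributes a single chosen point, $s$ is an order-embedding splitting $p$. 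Assembling these gives the split short exact sequence $\textbf{0}\to F\to E\to A\to\textbf{0}$, which is the desired extension of $A$ by $F$.
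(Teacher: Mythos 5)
Your construction is essentially the paper's own proof: the same global product $\mu(x,y)=\mu_{ij}(x,y)$ on $E=\boxplus_{i\in A}C_i$, the same identification of $F=C_{1_A}$ as a filter, and the same section $s$ sending $i$ to $0_i$ (and $1_A$ to $1_F$); the only presentational differences are that you define the projection $p$ directly by fibers where the paper computes the congruence classes of $\sim_F$ and identifies $\textbf{E}/F$ with $A$, and that you spell out the residuum and prelinearity checks that the paper compresses into one sentence (note that prelinearity is actually immediate here, since $E$ is totally ordered). No genuine divergence or gap relative to the paper's argument.
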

\begin{proof}
Take $E(\mathcal{C}_{(F,A)},\textbf{M}_{(F,A)}):= \bigcup \mathcal{C}$ with the mentioned order. Endow $\textbf{E}=E(\mathcal{C}_{(F,A)},\textbf{M}_{(F,A)})$ with the following binary operation:
\[\mu: \textbf{E}\times \textbf{E} \rightarrow \textbf{E} \]
defined by $\mu(e,f)=\mu_{ij}(e,f)$, if $e\in C_{i}$ and $f\in C_{j}$. Observe that conditions $(i)$ to $(v)$ of $\textbf{M}$ guarantee that $(\textbf{E}, \vee, \wedge, \mu, 1_{F})$ is an MTL-algebra. Furthermore, $F = C_{1}$ is a filter of $\textbf{E}$. Let us calculate $\textbf{E}/F$. For $e,f\in \textbf{E}$, $e\sim_{F} f$ if and only if there is a $g\in F$ such that $ge\leq f$ and $gf\leq e$, so it is clear that $F=[1]$. Now, let us assume $e,f\in C_{i}$ for some $i\neq 1_{A}$. From $(ii)$, $\mu_{1_{A}i}(0_{1_{A}},e)=\mu_{1_{A}i}(0_{1_{A}},f)=0_{i}$ so $e\sim_{F} f$. If $e\in C_{i}$ and $f\in C_{j}$ for some $i\neq j$, then, since $C_{i}\cap C_{j}=\emptyset$, thus for every $g\in F=C_{1_{A}}$ we have that $\mu_{1_{A}i}(g,e)\in C_{i}$ and $\mu_{1_{A}j}\in C_{j}$; that is, $e\nsim_{F}f$. Hence $C_{i}=[e]_{F}$ for any $e\in C_{i}$ and $i\in A$. So we have a bijection $\textbf{E}/F\rightarrow A$ which clearly is an MTL-morphism. Moreover, the map $s:A\rightarrow \textbf{E}$ defined by
\begin{displaymath}
s(i)= \left\{ \begin{array}{ll}
             1_{F}, & i=1_{A}
             \\ 0_{i}, & \mbox{otherwise}
             \end{array}
   \right.
\end{displaymath}

\noindent
is a section of $p:\textbf{E}\rightarrow A\cong \textbf{E}/F$. Hence, we get an split short exact sequence in $\SH$:
\begin{displaymath}
\xymatrix{
\textbf{0} \ar[r] & F \ar[r]^-{j} & \textbf{E} \ar@{->>}[r]_-{p} & A \ar@/_/[l]_-{s} \ar[r] & \textbf{0}
}
\end{displaymath}
\noindent
where $j$ is the inclusion.
\end{proof}
\noindent
The following result is a partial converse of Lemma \ref{Generalized chain products arise extensions}.

\begin{lem}\label{Extensions arise generalized chain products}
Let
\begin{displaymath}
\xymatrix{
\textbf{0} \ar[r] & F \ar[r]^-{j} & \textbf{E} \ar[r]^-{p} & A \ar[r] & \textbf{0}
}
\end{displaymath}
\noindent
be a split exact sequence in $\SH$ such that:
\begin{itemize}
\item[i)] $j(F)$ is a convex subset of $E$,
\item[ii)] $E,F$ and $A$ are finite chains.
\end{itemize}
\noindent
Then, there exists a generalized chain product $(\mathcal{C}_{(F,A)},\textbf{M}_{(F,A)})$ such that $\textbf{E}\cong E(\mathcal{C}_{(F,A)},\textbf{M}_{(F,A)})$.
\end{lem}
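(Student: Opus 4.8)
The plan is to reconstruct the generalized chain product directly from the fibres of $p$. First I would set $C_i := p^{-1}(i)$ for each $i \in A$. Each $C_i$ is nonempty (since $p$, being the cokernel arrow of an exact sequence, is surjective), finite, and a chain, being a subset of the finite chain $E$; moreover the $C_i$ are pairwise disjoint as fibres of a function, so conditions $1$ and $3$ of $\mathcal{C}_{(F,A)}$ hold. Since $p$ is a semihoop morphism it preserves $\wedge$ and $\vee$, hence is monotone; because $E$ and $A$ are chains this forces, for $i<j$ in $A$, every element of $C_i$ to lie strictly below every element of $C_j$ (otherwise some $f\in C_j$ would satisfy $f\le e$ for some $e\in C_i$, giving $j=p(f)\le p(e)=i$, a contradiction). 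Thus each fibre is convex and, as a poset, $E={\boxplus}_{i\in A}C_i$, which is exactly the order used in Definition \ref{Generalized chain product}. By exactness $j(F)=\ker p=p^{-1}(1_A)=C_{1_A}$, and the convexity hypothesis $(i)$ confirms that this copy of $F$ sits as a convex top block; identifying $C_{1_A}$ with $F$ through the isomorphism $j$ yields condition $2$.

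Next I would define the multiplications $\mu_{ij}\colon C_i\times C_j\to C_{i\cdot j}$ as the restrictions of the product of $E$: for $e\in C_i$ and $f\in C_j$ one has $p(e\cdot f)=p(e)\cdot p(f)=i\cdot j$, so $e\cdot f\in p^{-1}(i\cdot j)=C_{i\cdot j}$ and $\mu_{ij}$ is well defined. Conditions $(i)$, $(iii)$, $(iv)$ and $(v)$ of $\textbf{M}_{(F,A)}$ are then inherited verbatim from the semihoop structure of $E$: the product is monotone in each argument, associative and commutative, and the unit $1_F=1_E$ of $E$ (which lies in $C_{1_A}$ because $p(1_E)=1_A$) acts as a global unit. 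Consequently the reconstructed algebra $E(\mathcal{C}_{(F,A)},\textbf{M}_{(F,A)})$ of Lemma \ref{Generalized chain products arise extensions} carries the same underlying poset and the same multiplication as $\textbf{E}$; since in a semihoop the residuum is determined by the product and the order via $a\to b=\max\{c\mid a\cdot c\le b\}$, the two structures coincide, giving $\textbf{E}\cong E(\mathcal{C}_{(F,A)},\textbf{M}_{(F,A)})$.

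The only substantial point, and the step I expect to be the main obstacle, is verifying the absorption condition $(ii)$, namely $\mu_{ij}(0_i,k_j)=0_{ij}$ where $0_i=\min C_i$. I would argue by residuation. Fix $k_j\in C_j$ and an arbitrary $e\in C_{ij}$. Computing in $A$, $p(k_j\to e)=p(k_j)\to p(e)=j\to (i\cdot j)$, and residuation in $A$ gives $i\le j\to(i\cdot j)$ (equivalently $i\cdot j\le i\cdot j$); hence $k_j\to e$ lies in a fibre $C_c$ with $c\ge i$. By the block ordering established above, every element of such a fibre dominates $0_i$, so $0_i\le k_j\to e$, and residuation in $E$ then yields $0_i\cdot k_j\le e$. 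As $e$ ranges over $C_{ij}$ this gives $0_i\cdot k_j\le 0_{ij}$, while $0_i\cdot k_j\in C_{ij}$ forces $0_i\cdot k_j\ge 0_{ij}$; hence equality, and the companion identity $\mu_{ij}(k_i,0_j)=0_{ij}$ follows by commutativity. The delicate ingredient here is precisely the passage from ``$p(k_j\to e)\ge i$ in $A$'' to ``$k_j\to e\ge 0_i$ in $E$'', which is exactly what convexity of the blocks (the hypotheses $(i)$ and $(ii)$ of the statement together) secures; once this is in place all of $(i)$--$(v)$ hold and the proof is complete.
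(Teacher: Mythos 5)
Your proposal is correct and follows essentially the same route as the paper: you partition $E$ into the fibres of $p$ (which are exactly the $\sim_F$-congruence classes the paper works with, via the idempotent generating $F$), observe that monotonicity of $p$ forces these blocks to stack as ${\boxplus}_{i\in A}C_i$, and take the $\mu_{ij}$ to be the restrictions of the product of $E$. The only difference is one of detail: the paper dispatches conditions $(i)$--$(v)$ as ``easy to check,'' whereas you supply the residuation argument for the absorption condition $(ii)$, which is indeed the one point that genuinely needs an argument.
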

\begin{proof}
Observe that, since $j(F)\cong F$ is a convex submonoid of a finite partially ordered commutative integral monoid\footnote{Also called negative ordered monoid in \cite{V2010}.}, $F$ is a filter of $E$ and hence, there exists a unique idempotent $e_{F}\in E$ such that $F={\uparrow} e_{F}$. In the future we shall simply write $e$ for $e_{F}$. Notice that there is an isomorphism between $A$ and $eE$. By definition  of extension, we know that $A$ is just $E/F$, so we can define a map $f:E/F \rightarrow eE$ by $f([a])=ea$ which is clearly bijective. From this fact we can conclude that $[a]=\{x\in E \mid ex=ea\}$. Now, since each equivalence class is a convex set of $E$ (because each class is in particular a lattice congruence class), we have a decomposition of $E$ as a lattice as $E\cong {\mathrm{\boxplus}}_{[x]\in A} C_{[x]}$, where the $C_{[x]}$'s form a partition of $E$ by convex chains. Clearly, this corresponds to the quotient map. Now take $\mathcal{C}=\{[a]\mid [a]\in A\}$ and define $\mu_{[a][b]}:[a]\times [b]\rightarrow [ab]$ by $\mu_{[a][b]}(\alpha,\beta)=\alpha\beta$; that is, the restriction of the product to the respective $C_{[x]}$'s. It is easy to check that they form a family $\textbf{M}$ satisfying conditions $(i)$ to $(v)$ of the begining of this section.
\end{proof}
\noindent
To conclude this paper, we show how Lemma \ref{Extensions arise generalized chain products} justifies the use of the term ``generalized chain product'' in Definition \ref{Generalized chain product}, in the sense that regular sum products (ordinal sums) in locally unital MTL-chains are a very particular sort of generalized chain products.

\begin{coro}\label{Ordinal sums are generalized chain products}
Ordinal sums of finite locally unital MTL-chains are generalized chain products.
\end{coro}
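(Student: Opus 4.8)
The plan is to view any such ordinal sum as a split extension in $\SH$ whose kernel is a convex filter, and then to apply Lemma \ref{Extensions arise generalized chain products}, which manufactures the required generalized chain product. In this way the corollary becomes essentially a special case of Lemma \ref{Extensions arise generalized chain products}; the only work is to check that an ordinal sum meets its hypotheses.

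Let $M=\bigoplus_{i\in I}A_{i}$ be an ordinal sum of finite locally unital MTL-chains, with $I$ finite and the bottom summand bounded, so that $M$ is a finite MTL-chain. Let $\top=\max I$ and let $e$ be the least element of the top summand $A_{\top}$. The product rule of the ordinal sum gives $xe=x$ for every $x\leq e$, so $e$ is a local unit of $M$; that is, $e\in\LU(M)$.

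Applying Remark \ref{Observation 1} to $e$ (which rests on Lemma \ref{Classification of some split extensions in SH}) produces the split short exact sequence
\[
\xymatrix{
\textbf{0} \ar[r] & {\uparrow}e \ar[r]^-{k} & M \ar[r]^-{p} & eM \ar@/_/[l]_-{s} \ar[r] & \textbf{0}
}
\]
in $\SH$ and identifies $M$ with the ordinal sum ${\uparrow}e\oplus eM$; here $F:={\uparrow}e=A_{\top}$ is a finite MTL-chain with least element $e$, and $A:=eM\cong\bigoplus_{i<\top}A_{i}$ is the finite MTL-chain assembled from the remaining summands. The kernel $k(F)={\uparrow}e$ is an up-set of the chain $M$ and hence convex, and $M$, $F$, $A$ are all finite chains, so both hypotheses of Lemma \ref{Extensions arise generalized chain products} hold.

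By Lemma \ref{Extensions arise generalized chain products} there is then a generalized chain product $(\mathcal{C}_{(F,A)},\textbf{M}_{(F,A)})$ with $M\cong E(\mathcal{C}_{(F,A)},\textbf{M}_{(F,A)})$, which is exactly the claim. I expect the only point demanding care to be the verification that $F$ and $A$ genuinely qualify as finite MTL-chains---in particular that each inherits a least element playing the role of $0$---and that the section $s$ is a morphism of semihoops; convexity of the kernel and the finiteness conditions are immediate, so this bookkeeping is the sole substantive obstacle. Finally, it is illuminating to record how special the resulting product is: since $e$ absorbs every element below it, each $\sim_{F}$-class other than $F$ itself is a singleton, so all fibers $C_{[x]}$ with $[x]\neq 1_{A}$ collapse to single points while $C_{1_{A}}=F$. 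This is the precise sense in which ordinal sums form a ``very particular'' subclass of generalized chain products.
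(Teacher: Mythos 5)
Your argument is correct, but it takes a genuinely different route from the paper's. The paper never invokes Lemma \ref{Extensions arise generalized chain products}: it constructs the generalized chain product directly, taking the base $A$ to be the index chain $I$ itself (turned into a finite MTL-chain, in fact a Heyting chain, via $i\cdot j=i\wedge j$), the fibres $C_i$ to be the summands $M'_i$ (with the shared unit deleted from all but the top one), and the maps $\mu_{ij}$ to be the ordinal-sum product rules ($\mu_{ii}=\cdot_i$, and projection onto the lower index otherwise); Lemmas \ref{Every QMTL finite chain is an ordinal sum of archimedian chains} and \ref{Generalized chain products arise extensions} then confirm that this data reassembles into the ordinal sum. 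You instead peel off only the top summand: $e=0_{A_\top}$ is a local unit, Remark \ref{Observation 1} yields the split extension with kernel ${\uparrow}e=A_\top$ and quotient $\bigoplus_{i<\top}A_i$, and the converse Lemma \ref{Extensions arise generalized chain products} manufactures a generalized chain product realizing $M$ --- one whose base is the entire lower part of the sum and whose fibres are singletons except for $C_{1_A}=A_\top$, exactly as you note at the end. Both arguments establish the statement as literally phrased. What the paper's construction buys is the more faithful picture: the full ordinal-sum data, with \emph{every} summand appearing as a fibre over the index set, is itself an instance of generalized-chain-product data, which is the sense in which the authors say ordinal sums ``are'' generalized chain products. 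What yours buys is economy: it reuses Lemma \ref{Extensions arise generalized chain products} rather than re-verifying axioms $(i)$--$(v)$, at the price of exhibiting only a coarse two-level structure (top summand over everything else) instead of the $I$-indexed one. One minor caveat for your version: the degenerate case $|I|=1$ should be treated separately, since there $eM$ collapses to the trivial algebra while Definition \ref{Generalized chain product} tacitly assumes $0_A<1_A$; the statement is immediate in that case anyway.
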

\begin{proof}
Let $\{M_{i}\}_{i\in I}$ be a family of finite locally unital MTL-chains, where $I$ is a finite totally ordered set with top $\top$. Take $\{M'_{i}\}_{i\in I}$ with $M'_\top = M_\top$ and $M'_i = M_i - \{ 1_{M_i} \}$ for $i \neq \top$. Note that each $M'_i$ is a semigroup. Take $i\cdot j = i \wedge j$, for $i, j \in I$. This makes of $I$ a finite MTL-algebra (in fact, a Heyting algebra). By regarding $\mathcal{C}_{(M'_{T},I)}=\{M'_{i}\}_{i\in I}$ and $\textbf{M}_{(M'_{T},I)}=\{\mu_{ij}:M'_{i}\times M'_{j}\rightarrow M'_{i \wedge j} \mid i,j\in I\}$, defined as $\mu_{ij}(a, b)= a \cdot_{i} b$, if $i = j$; $\mu_{ij}(a, b) = a$, if $i < j$; and $\mu_{ij}(a, b)= b$, if $j < i$; from Lemmas \ref{Every QMTL finite chain is an ordinal sum of archimedian chains} and \ref{Generalized chain products arise extensions} the result follows.
\end{proof}



\end{document}